\documentclass[12pt,a4paper,notitlepage]{amsart}
\usepackage[pdftex]{hyperref}
\usepackage[all]{xy}
\newtheorem{Proposition}{Proposition}[section]
\newtheorem{Theorem}[Proposition]{Theorem}

\newtheorem{Corollary}[Proposition]{Corollary}
\newtheorem{Example}[Proposition]{Example}
\newtheorem{Definition}[Proposition]{Definition}

\theoremstyle{remark}

\openup7pt
\setlength{\textwidth}{6.5in}
\setlength{\oddsidemargin}{0.0in}
\setlength{\evensidemargin}{0.0in}
\setlength{\textheight}{9.0in}
\setlength{\topmargin}{-0.5in}

 \newcommand{\amod}{\A\text{\bf -mod}}
 \newcommand{\amoda}{\A\text{\bf -mod-}\A}
 \newcommand{\amodb}{\A\text{\bf -mod-}\B}

 \newcommand{\Bmod}{\B\text{\bf -mod}}
 \newcommand{\bmoda}{\B\text{\bf -mod-}\A}
 \newcommand{\bmodb}{\B\text{\bf -mod-}\B}
 \newcommand{\bimod}{\text{\bf -mod-}}
 \newcommand{\rmod}{\text{\bf mod-}}
  \newcommand{\lmod}{\text{\bf -mod}}

 \newcommand{\B}{\mathcal{B}}

\renewcommand{\d}{\delta}

 \newcommand{\moda}{\text{\bf mod-}\A}
 
 \newcommand{\ot}{\otimes}
 \newcommand{\ota}{\otimes_{\substack{\A}}}
 \newcommand{\otb}{\otimes_{\substack{\B}}}
 \newcommand{\pota}{\widehat\otimes_{\substack{\A\\{\;}}}}
 \newcommand{\potb}{\widehat\otimes_{\substack{\B}}}
 \newcommand{\pot}{\widehat\otimes}

\newcommand{\A}{\mathcal A}  %

\newcommand{\BB}{\mathbb B}  %
\newcommand {\CC}{\mathbb C} %
\newcommand {\RR}{\mathbb R} %
\newcommand {\NN}{\mathbb N} %
\newcommand {\ZZ}{\mathbb Z} %

\newcommand{\0}{{\emptyset}}
\newcommand{\nul}{\mathbf{o}}
\newcommand{\x}{{\times}}

\renewcommand{\o}{\ifmmode\mathbin{\otimes}\else\char"1C\fi}%

\newcommand{\from}{\leftarrow}
\renewcommand{\H}{\mathcal H}  %

\begin{document}

\title{(Co-)homology of Rees semigroup algebras}
\author{Fr\'ed\'eric Gourdeau}

\address{D\'epartement de math\'ematiques et de statistique,
1045, avenue de la M\'edecine, Universit\'e Laval,  Qu\'ebec (Qu\'ebec),
Canada G1V 0A6}

\email{Frederic.Gourdeau@mat.ulaval.ca}
\author{Niels Gr\o nb\ae k}
\address{Department of Mathematical Sciences, University of
  Copenhagen, DK-2100 Copenhagen \O, Denmark}
\email{gronbaek@math.ku.dk}

\author{Michael C. White}

\address{Department of Mathematics, University of Newcastle, Newcastle upon Tyne, NE1 7RU, England}

\email{Michael.White@ncl.ac.uk}
  \date{\today}
  \begin{abstract}
    Let $S$ be a Rees semigroup, and let $\ell^1(S)$ be its
    convolution semigroup algebra. Using Morita equivalence we show
    that bounded Hochschild homology and cohomology of $\ell^1(S)$ is
    isomorphic to those of the underlying discrete group algebra.
  \end{abstract}

  \subjclass[2000]{Primary 46H20, 46M20; Secondary 43A20, 16E40}

  \keywords{Cohomology, Simplicial cohomology, Morita equivalence, Rees semigroup
  Banach algebra.}
  \maketitle

\section{Introduction}

In this paper we calculate the simplicial cohomology of the
$\ell^1$-algebra of Rees semigroups, motivated by the explicit
computations of the first~\cite{[BDu]} and second order cohomology
groups~\cite{[DaDu]} of several Banach algebras.  These papers contain
a number of results showing that the simplicial cohomology (in
dimensions $1$ and $2$ respectively) of $\ell^1$-semigroup algebras
are trivial for many of the fundamental examples of semigroups.  The
first paper considers the first order simplicial and cyclic cohomology
of Rees semigroup algebras, the bicyclic semigroup algebra and the
free semigroup algebra.  The second paper \cite{[DaDu]} shows that the
second simplicial cohomology vanishes for the semigroups $\ZZ_+$,
semilattice semigroups and Clifford semigroups.  It should be noted
that in each of these papers the arguments are mostly ad hoc.  These
two papers are followed by papers involving some of the current
authors \cite{[GW01]} and \cite{[GPW]} which cover the case of the
third cohomology groups.  In these papers there is some attempt at
systematic methods which might be adapted to cover the case of all
higher cohomology groups, but the authors were not able to do this at
that time.  Finally, later papers \cite{[GJW]}, \cite{[GLW05]},
\cite{[GLW06]} show how to calculate the higher order simplicial
cohomology groups of some of these algebras.  These later papers do
not use ad hoc calculations, but general homological machinery, such
as the Connes-Tsygan long exact sequence and topological simplicial
homology to deduce their results.  The present paper belongs to the
latter family of papers, in that it uses general homological tools.

The proofs in this paper are based firmly on the Morita equivalence methods
developed by the second named author~\cite{[Gr95]} and \cite{[Gr96]}
which apply to a general Banach algebra context. We shall utilize that for the class of
so-called self-induced  Banach algebras, bounded Hochschild
(co-)homology with appropriate coefficients is  Morita invariant.

We briefly describe our general approach. For a semigroup, $T$, with an
absorbing zero, $\0$, there are two Banach algebras that naturally
arize, the discrete convolution algebra $\ell^1(T)$, and the reduced
algebra $\A(T)$ (to be defined below) associated with the inclusion
$\0\hookrightarrow T$. In the case of a Rees semigroup, $S$, with
underlying group $G$, we prove that $\A(S)$ is Morita equivalent to
$\ell^1(G)$.  This may be interpreted as a manifestation of a basic
fact from algebraic topology that for a path connected space $X$ the
fundamental groupoid $\pi(X)$ and the based homotopy group
$\pi_1(X,a)$ are equivalent categories.

Our results then hinge on Theorem 3.1 in which a natural
isomorphism between  the (co-)homology of $\A(S)$ and of the underlying group
algebra is established by means of Morita equivalence. Using an
excision result from \cite{[LW]} we prove that the (co-)\nolinebreak homology of
$\ell^1(S)$ and of $\A(S)$ are isomorphic for a large class of coefficient
modules. Since $\ell^1(S)$ and $\A(S)$ both are H-unital
(cf. \cite{[Wod]}), we further obtain (co-)homology results for the
forced unitizations $\ell^1(S)^\sharp$ and $\A(S)^\sharp$. From this we
derive our main result  that for a Rees semigroup, $S$,
the simplicial cohomology $\H^n(\ell^1(S),\ell^1(S)^*)$ ($n\ge 1$)
of the semigroup algebra is isomorphic to the simplicial
cohomology  $\H^n(\ell^1(G),\ell^1(G)^*)\;(n\geq1)$ of the underlying group algebra.

\section{Basics}

A {\em completely 0-simple semigroup} is a semigroup which has a $0$, has
no proper ideal other than $\{ 0 \}$, and has a primitive
idempotent, that is a minimal idempotent $e$ in the set of non-zero
idempotents (or, precisely, an idempotent $e\ne 0$ such that if
$ef=fe=f\ne 0$ for an idempotent $f$, then $e=f$). Such semigroups
arise naturally and notably in classification theory as quotients of
ideals. It is an important result that any completely 0-simple
semigroup is isomorphic to what we call a Rees semigroup (and
conversely). For this result and more background, see \cite[Chapter 3
and Theorem 3.2.3]{[Ho]}. We now give the definition of a Rees semigroup
and some special cases of Rees semigroups to illustrate how many
natural semigroups are of this form.

The data which are required to define a Rees semigroup are given by two
index sets $I$ and $\Lambda$, a group $G$ and a {\em sandwich matrix}
$P$.  We introduce two zero elements: the first, $\nul$, is an absorbing
element adjoined to $G$ to make the semigroup denoted $G^\nul$.  The
second element, $\0$, is an absorbing zero for the Rees semigroup
itself.  The sandwich matrix $P = (p_{\lambda i})$, is a set of
elements of $G^\nul$ indexed by $\Lambda\x I$, such that each row and
column of $P$ has at least one non-zero entry.  The Rees semigroup
$S$ is then the set $I\x G \x \Lambda \cup \{\0\}$, where $\0$ is
an absorbing zero for the semigroup, and the other products are
defined by the rule
\begin{eqnarray*}
(i, g, \lambda) (j, h, \mu) =
 \begin{cases}
  (i, g p_{\lambda j} h, \mu) & (p_{\lambda j} \not= \nul) \\
  \0   & (p_{\lambda j} = \nul). \\
 \end{cases}
 \end{eqnarray*}

\subsection{Examples}

There are two extreme, degenerate cases which provide good intuition for the logic of calculations.

The first is the case where $I$ and $\Lambda$ are just singletons
and the sandwich matrix consists of the identity of $G$.
In this case $S$ is just $G^\nul$. %
The reduced semigroup algebra, $\A(S)$ defined below, will give us
$\ell^1(G)$ in this case.

The second case has the group, $G$, being trivial and the index sets
being both equal to the set of the first $n$ natural numbers $\{1, 2,
\ldots, n\}$. The sandwich matrix is diagonal with the group identity
repeated along the diagonal. If we identify $G$ with
$\{1\}\subseteq\CC$ the Rees semigroup becomes the system of matrix
units together with the zero-matrix, $S=\{E_{ij}: 1\leq i,j \le
n\}\cup\{0\}$, and $\ell^1(S)$ is the algebra of complex $2$-block
$(n+1)\times(n+1)$ matrices consisting of an upper $n\times n$ block
and a lower $1\times1$ block.  The reduced semigroup algebra, $\A(S)$,
does not have this deficiency and is exactly the matrix algebra,
$M_n(\CC)$.

Our third example is from homotopy theory and is almost generic for
the concept of Rees semigroups. Recall that a small category in which
every morphism is invertible is called a \emph{groupoid}. It is
\emph{connected} if there is a morphism between each pairs of
objects. The canonical example is $\pi(X)$ the \emph{fundamental
  groupoid} of a path connected topological space $X$. The objects are
the points of $X$ and for $x,y\in X$ the morphism set $\pi(X)(x,y)$ is
the set  of homotopy classes relative to $x,y$ of paths from $x$ to
$y$ with composition derived from products of paths. In particular $\pi(X)(x,x)=\pi_1(X,x)$ for each $x\in X$, so that the
fundamental group of $X$ at $x$ is identified with the full
subcategory of $\pi(X)$ with just one object $x$. Since $X$ is path connected
$\pi_1(X,x)\cong\pi_1(X,y)$ for each $x,y\in X$, for details see \cite{[RB]} 

Fix $a\in X$ and
set $G=\pi_1(X,a)$. We associate Rees semigroups to $\pi(X)$ in the following way. For
each $x\in X$ choose $s_x\in \pi(X)(a,x)$ and
$t_x\in\pi(X)(x,a)$. This gives bijections
$\psi_{x,y}\colon\pi(X)(x,y)\to G$
$$
\psi_{x,y}(\gamma)=s_x\gamma t_y,\quad x,y\in X,\;\gamma\in
\pi(X)(x,y).
$$  
Then we get
\[
\psi_{x,z}(\gamma\gamma')=\psi_{x,y}(\gamma)(s_yt_y)^{-1}\psi_{y,z}(\gamma'),\quad
x,y,z\in X,\;
\gamma\in\pi(X)(x,y),\;\gamma'\in\pi(X)(y,z).
\]
Let $I,\Lambda$ be sets and consider maps $\alpha\colon I\to
X,\;\beta\colon\Lambda\to X$. We define multiplication on $(I\times
G\times\Lambda)\cup\0$ as follows. For $i,j\in I,\;g,h\in
G,\;\lambda,\mu\in\Lambda,$ put
$\gamma=\psi_{\alpha(i),\beta(\lambda)}^{-1}(g),\;\gamma'=\psi_{\alpha(j),\beta(\mu)}^{-1}(h)$
and set
\begin{eqnarray*}
(i, g, \lambda) (j, h, \mu) =
 \begin{cases}
  (i,\psi_{\alpha(i),\beta(\mu)}(\gamma\gamma'), \mu) &(\alpha(j)=\beta(\lambda)) \\
  \0   & (\alpha(j)\neq\beta(\lambda))\\
 \end{cases}
 \end{eqnarray*}
so that the product is simply given by products of paths, when
defined. The sandwich matrix is 
\begin{eqnarray*}
p_{\lambda i}=\begin{cases} (s_{\alpha(i)}t_{\beta(\lambda)})^{-1}&(\alpha(i)=\beta(\lambda))\\
\nul&(\alpha(i)\neq\beta(\lambda)).
\end{cases}
\end{eqnarray*}

The condition that the sandwich matrix $(p_{\lambda i})$ has a
non-zero entry in each row and each column is
$\alpha(I)=\beta(\Lambda)$.

It is a simple fact, but crucial to the use of groupoids in algebraic
topology cf. \cite[Chp. 8]{[RB]}, that the fundamental groupoid $\pi(X)$ and the
based homotopy group $\pi_1(X,a)$ are equivalent categories. Our main
result on Morita equivalence is a manifestation of this fact in the
setting of convolution Banach algebras.

\subsection{Background on homological algebra}

The main result of this paper concerns boun\-ded (co-)homology of Banach algebras,
so we will give a brief description of the theory of
(co-)\nolinebreak homology of Banach algebras, as we fix the notation we will use
for this paper. For further details we refer to \cite{[He89]}.

For a Banach algebra $\A$ we denote the categories of left (right)
Banach $\A$-modules and bounded module homomorphisms by $\amod$
(respectively $\moda$). If $\B$ is also a Banach algebra, the category
of Banach $\A-\B$ bimodules and bounded homomorphisms is $\amodb$. A
{\em full subcategory} is a subcategory $\frak C$ which includes all
morphisms between objects of $\frak C$.

Let $\A$ be a Banach algebra, $\A^\#$ be its
  forced unitization and $X\in \amod$. The \emph{bar
    resolution} of $X$ is the complex
\[
\BB(\A,X)\colon 0\from X\from \A^\#\pot X\from \A^\#\pot\A\pot
X\from\cdots\from\A^\#\pot\A\pot\cdots\pot\A\pot X\from\cdots
\]
with boundary maps
\[
b(a_1\ot\cdots\ot a_n\ot x)=\sum_{k=1}^{n-1}(-1)^{k-1}a_1\ot \cdots \ot
a_ka_{k+1}\ot\cdots \ot x+(-1)^{n-1}a_1\ot\cdots\ot a_{n-1}\ot a_nx
\]
Similarly we define the bar resolution for $X\in\moda$. It
is a standard fact that $\BB(\A,X)$ is contractible.

The \emph{simplicial complex} of $\A$ is the subcomplex of
$\BB(\A,\A)$ with the first tensor factor in $\A$ rather than $\A^\#$:
\[
0\from\A\from\A\pot\A\from\cdots\from \A^{n\pot}\from \cdots\from
\]

If the dual complex of the simplicial complex
\[
0\to\A^*\to(\A\pot\A)^*\to\dots\to (A^{n\pot})^* \to \cdots \to
\]
splits as a complex of Banach spaces, then $\A$ is \emph{H-unital} \cite{[Wod]}.

A module $X\in\amodb$ is \emph{induced} if the multiplication
$$
\A\pota X\potb\B\to X\colon a\ota x\otb b\mapsto axb
$$
is an isomorphism. If $\A$ is induced as a module in $\amoda$ then
$\A$ is \emph{self-induced}.

A bounded linear map $L\colon E\to F$ between Banach spaces is
\emph{admissible} if $\ker E$ and $\operatorname{im} E$ are
complemented as Banach spaces in $E$ respectively $F$.

A module $P\in\amod$ is  \emph{(left) projective} if, for every admissible
epimorphism $q\colon Y\to Z$, all lifting
problems in $\amod$
\[
\xymatrix{
{}&P\ar[d]^{\phi}\ar@{-->}[dl]|?&{}\\
Y\ar[r]_{q}&Z\ar[r]&0
}
\]
can be solved. If all, not just admissible, lifting problems can be
solved, then $P$ is \emph{strictly} projective. The module $P$ is
\emph{(left) flat} if for every admissible short exact sequence in $\moda$
$$
0\to X\to Y\to Z\to 0
$$
the sequence
$$
0\to X\pota P\to Y\pota P\to Z\pota P\to0
$$
is exact, and \emph{strictly} flat if the requirement of admissibility
can be omitted.

The fundamental concept of our approach is Morita equivalence.

\begin{Definition} Two self-induced Banach
  algebras $\A$ and $\B$ are \emph{Morita equivalent} if there are induced
  modules $P\in\bmoda$ and $Q\in\amodb$ so that
\[P\pota Q\cong\B \quad\text{and}\quad Q\potb P\cong \A,\]
where the isomorphisms are implemented by bounded bilinear balanced module maps
$[\cdot,\cdot]\colon P\times Q\to\B$ and $(\cdot,\cdot)\colon
Q\times P\to \A$ satisfying
\[
[p,q].p'=p.(q,p')\quad q.[p,q']=(q,p).q'\quad p,p'\in P;\; q,q'\in Q.
\]
\end{Definition}

Our objective is to describe bounded Hochschild (co-)homology of Rees
semigroup algebras in terms of the (co-)homology of the algebra of the underlying
group. First we define homology.

\begin{Definition}
For $X\in\amoda$ the Hochschild complex is
\[
0\from X\from X\pot \A\from\cdots \from X\pot\A^{n\pot}\from\cdots
\]
with boundary maps given as
\begin{eqnarray*}
\d(x\ot a_1\ot\cdots\ot a_n)&=&xa_1\ot
a_2\ot\cdots \ot a_n
+x\ot\sum_{k=1}^{n-1}(-1)^k a_1 \ot\cdots\ot
a_ka_{k+1}\ot\cdots \ot a_n\\
& & +(-1)^n a_nx\ot a_1\ot\cdots\ot a_{n-1}
\end{eqnarray*}
The \emph{bounded Hochschild homology} of $\A$ with coefficients in
$X$, $\H_n(A,X),\; n=0, 1, \dots$, is the homology of this complex.
The \emph{bounded Hochschild cohomology} of $\A$ with coefficients in
the dual module $X^*$, $\H^n(A,X^*),\; n=0, 1, \dots$, is the
cohomology of the dual complex
\[
0\to X^*\to(\A\pot X)^*\to\cdots\to(\A^{n\pot}\pot X)^*\to\cdots
\]
\end{Definition}

Our main result hinges on the fact that bounded Hochschild homology
and cohomology under certain conditions are Morita invariant, cf \cite{[Gr96]}. We state
the version that we need in the paper.

\begin{Theorem}\label{invariance} Let $\A$ and $\B$ be Morita equivalent Banach
  algebras with  implementing modules $P\in\bmoda$ and $Q\in\amodb$. If
  $P$ is
  right flat as a module in $\moda$ and left flat as
  a module in $\Bmod$, then there are natural isomorphisms
\[
\H_n(\A,X)\cong\H_n(\B,P\pota X\pota Q)\text{ and }\H^n(\A,X^*)\cong\H^n(\B,(P\pota X\pota Q)^*)
\]
for all induced modules $X\in \amoda$.
\end{Theorem}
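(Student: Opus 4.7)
The plan is to compute both sides by means of a flat bimodule resolution that is transferred through the Morita functor $F(Y) = P\pota Y \pota Q$. For the self-induced algebra $\A$, the two-sided bar complex $\beta_\bullet(\A)\to\A$ provides a resolution of $\A$ by induced, flat $\A$-bimodules, so that $\H_n(\A,X)$ can be computed as the homology of $X\pot_{\A^e}\beta_\bullet(\A)$. Applying $F$ termwise to this resolution produces a complex $F(\beta_\bullet(\A))\to F(\A)$, where $F(\A)\cong\B$ follows from $P\pota\A\pota Q\cong P\pota Q\cong\B$ using the self-inducedness of $\A$. The flatness hypotheses on $P$, together with the Morita structure, which transfers flatness across the equivalence, ensure that $F$ preserves admissibility and flatness, so $F(\beta_\bullet(\A))\to\B$ is a flat resolution of $\B$ computing $\H_n(\B,F(X))$.

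The heart of the proof is a natural chain-level isomorphism
\[
X\pot_{\A^e}\beta_\bullet(\A)\;\cong\;F(X)\pot_{\B^e}F(\beta_\bullet(\A)),
\]
which uses the balanced pairings $[\,\cdot\,,\,\cdot\,]\colon P\times Q\to\B$ and $(\,\cdot\,,\,\cdot\,)\colon Q\times P\to\A$ to shuffle the $P$ and $Q$ factors past the intervening tensor factors and then collapse them via $P\pota Q\cong\B$ and $Q\potb P\cong\A$. The compatibility identities $[p,q]\cdot p' = p\cdot(q,p')$ and $q\cdot[p,q'] = (q,p)\cdot q'$ guarantee that this shuffling is well-defined on the balanced tensor products, so that the chain isomorphism is canonical and natural in $X$. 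The cohomology statement then follows by applying the duality functor to this chain isomorphism, using the standard adjunction between the projective tensor product and bounded bilinear maps.

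The main obstacle will be verifying, within the Banach/admissible framework, that $F$ genuinely produces a flat resolution of $\B$: one must track admissibility of the boundary maps after applying $F$, and check that flatness of $P$ on both sides, combined with the Morita symmetry, is enough to make each $F(\beta_n(\A))$ flat as a $\B$-bimodule rather than only on one side. Careful handling of the self-induced subcategory within $\amoda$, and of the compatibility of the various projective tensor products over $\A$, $\B$, $\A^e$, and $\B^e$, is where the bulk of the technical work lies.
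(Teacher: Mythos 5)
Your overall strategy---transporting the two-sided bar resolution of $\A$ through $F(Y)=P\pota Y\pota Q$ and comparing chain complexes---is in the right spirit, and your chain-level identification is plausible degreewise: for induced $X$ both complexes reduce to $X\pot\A^{\pot n}$ via $P\pota Q\cong\B$ and $Q\potb P\cong\A$. The genuine gap is the step where you declare $F(\beta_\bullet(\A))\to\B$ to be a ``flat resolution of $\B$ computing $\H_n(\B,F(X))$''. Two things go wrong there. First, in the bounded relative setting there is no comparison theorem for arbitrary exact complexes of flat bimodules: $\H_n(\B,\cdot)$ is defined by the Hochschild (bar) complex, and another resolution can be substituted only if it is admissible, or else one must run a flatness-based double-complex argument. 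Your transported complex is exact (the homotopy $x\mapsto 1\ot x$ is a right module map, so it survives $-\pota Q$, and then right flatness of $P$ over $\A$ preserves exactness under $P\pota-$), but it is \emph{not} admissible: the contracting homotopies of the bar complex are one-sided module maps and do not survive the two-sided balanced tensoring, so the standard comparison argument is unavailable---especially for a merely self-induced, non-unital $\B$. Second, your intended repair, that each $F(\beta_n(\A))\cong P\pot\A^{\pot n}\pot Q$ is flat as a $\B$-bimodule, would require right $\B$-flatness of $Q$; the theorem hypothesizes nothing about $Q$, and in the Banach/self-induced setting this does not follow from the Morita data (this is precisely why the statement only assumes flatness of $P$ on both sides, and in the application $P=e\A(S)$ is shown projective on both sides while nothing is proved about $Q$). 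So ``the Morita structure transfers flatness'' is exactly the unproved point your argument rests on.

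The paper avoids both problems with the Waldhausen first-quadrant double complex of \cite{[Gr96]}: its edge complexes are the Hochschild complexes of $(\A,X)$ and of $(\B,P\pota X\pota Q)$, its $n$-th row is $P\pota\BB(\A,X\pota Q\pot\B^{\pot(n-1)})$ and its $n$-th column is $\BB(\B,\A^{\pot(n-1)}\pot X\pota Q)\potb P$. Since the bar complexes are contractible, right flatness of $P$ over $\A$ makes the rows acyclic and left flatness of $P$ over $\B$ makes the columns acyclic, and \cite[Lemma 6.1]{[Gr95]} then identifies the homologies of the two edge complexes; dualizing the double complex gives the cohomology statement. Note how each flatness hypothesis is used only one-sidedly, in one direction of the double complex, which is what lets the theorem say nothing about $Q$. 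To salvage your picture you should reorganize the resolution transfer into exactly such a double-complex comparison rather than invoking resolution-independence.
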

\begin{proof} To establish the homology statement we briefly recall the Waldhausen first quadrant double
  complex. For details we refer to  \cite[pp. 132-133]{[Gr96]}.
On the horizontal axis it is the Hochshild  complex  in $\amoda$ for $X$
and on the vertical axis it is the Hochschild  complex in $\bmodb$ for
$P\pota X\pota Q$. For $n\geq1$ the $n$'th row is
$$
P\pota \BB(\A,X\pota Q\pot\B^{\pot(n-1)})
$$
and the $n$'th column is
$$
\BB(\B,\A^{\pot(n-1)}\pot X\pota Q)\potb P.
$$
By \cite[Lemma 6.1]{[Gr95]} it suffices that rows and
columns are acyclic for $n\geq1$. As the bar complexes are contractible,
this follows from the flatness properties of $P$.

Dualizing the Waldhausen double complex we obtain the cohomology
statement.
\end{proof}

\subsection{Semigroup algebras}

Given a semigroup $T$, the semigroup algebra is the Banach space
$\ell^1(T)$, equipped with the product which extends the product
defined on the natural basis from $T$, by bilinearity, to the whole of
$\ell^1(T)$. Throughout, we denote an element of the natural
basis for $\ell^1(T)$, corresponding to $t \in T$, by $t$ itself.

An \emph{absorbing element}  for a semigroup $T$ is an element $\0\in
T$ so that $t\0=\0 t=\0$ for all $t\in T$. Obviously there is at most
1 absorbing element in $T$. If $\0$ is absorbing, then $\CC\0$ is a
1-dimensional 2-sided ideal of $\ell^1(T)$. Our calculations are more
easily done modulo this ideal.

\begin{Definition} Let $T$ be a semigroup with  absorbing element
  $\0$. The \emph{reduced semigroup algebra} is
\[
\A(T)=\ell^1(T)\big/\CC\0.
\]
As a Banach space, $\A(T)$ is isometrically isomorphic to
$\ell^1(T\setminus\{\0\})$ and the multiplication is given by
\begin{equation*}
st=\left\{\begin{array}{rl} st&\text{if } st\neq\0\text{ in }T,\\
                         0&\text{if } st=\0\text{ in }T.\end{array}\right.
\end{equation*}

We note that if the semigroup satisfies $T^2=T$
then the multiplication maps
$$
\ell^1(T)\pot\ell^1(T)\to\ell^1(T)\text{ and }\A(T)\pot\A(T)\to\A(T)\label{surj}
$$
are both surjective.

For $X\in\ell^1(T)\bimod\ell^1(T)$ the \emph{reduced module} is
\[
\widetilde X=\frac{X}{\0 X + X \0}.
\]
The reduced module is canonically a module in $\A(T)\bimod\A(T)$.
\end{Definition}

\begin{Example} The concept of reduced semigroup algebra fits in the more
  general context of extension of semigroups. If $I$ is a semigroup
  ideal of a semigroup $T$, then the Rees factor semigroup $T/I$ has the equivalence class $I$ as an
  absorbing zero and we get the corresponding admissible extension of Banach
  algebras
$$
0\to\ell^1(I)\to\ell^1(T)\to\A(T/I)\to0.
$$
\end{Example}

Note that every module in $X\in\A(T)\bimod\A(T)$ can be obtained as a reduced
module from a module in $\ell^1(T)\bimod\ell^1(T)$, simply by extending to an
action of $T$ by $\0X=X\0=\{0\}$, so that in this case $\widetilde X=X$.

From \cite {[LW]}, we get the following proposition relating the Hochschild homology and cohomology of $\ell^1(T)$ and $\A(T)$.
\begin{Proposition}\label{reduced} Let  $X\in\ell^1(T)\bimod\ell^1(T)$ be such $\0 X = X \0$. Then
$$
\H_n(\ell^1(T),X)\cong\H_n(\A(T),\widetilde X)\text{ and }\H^n(\ell^1(T),X^*)\cong\H^n(\A(T),\widetilde X^*)
$$
for $n=0,1,\dots$
\end{Proposition}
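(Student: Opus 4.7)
The natural approach is to invoke the excision result of \cite{[LW]} applied to the admissible extension of Banach algebras
$$
0 \to \CC\0 \to \ell^1(T) \to \A(T) \to 0,
$$
which splits as a sequence of Banach spaces via the identification $\A(T)\cong\ell^1(T\setminus\{\0\})$. This is the only nontrivial extension naturally attached to the pair $(\ell^1(T),\A(T))$, and its two terms are precisely the algebras appearing in the statement, so it is clear that [LW] must be applied to this sequence.

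First I would verify that the hypotheses of the relevant theorem from \cite{[LW]} are met. The $1$-dimensional ideal $\CC\0$ is spanned by the idempotent $\0$, which is \emph{central} in $\ell^1(T)$ because $\0 t = t\0 = \0$ for every $t\in T$. Hence $\CC\0\cong\CC$ as a Banach algebra, and is trivially H-unital (indeed unital). The hypothesis $\0X = X\0$ ensures that $X_0:=\0X=X\0$ is a genuine sub-bimodule of $X$, so that the short exact sequence of $\ell^1(T)$-bimodules
$$
0\to X_0 \to X \to \widetilde X \to 0
$$
is well-defined and $\widetilde X = X/X_0$ carries a canonical $\A(T)$-bimodule structure (with $\0$ acting as zero on both sides).

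With the setup in place, the excision theorem of \cite{[LW]} should directly yield
$$
\H_n(\ell^1(T),X)\cong\H_n(\A(T),\widetilde X),\qquad \H^n(\ell^1(T),X^*)\cong\H^n(\A(T),\widetilde X^*)
$$
for all $n\ge 0$, with naturality inherited from that of the excision sequence. The cohomology statement will be obtained by dualizing throughout, using the admissibility (Banach-splittability) of the extension to ensure that taking dual complexes preserves exactness.

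The principal obstacle is matching the specific form of excision proved in \cite{[LW]} to the condition imposed on $X$. One must check that $\0X=X\0$ is precisely what is needed to cancel the contribution of the ideal $\CC\0$ in the excision long exact sequence, leaving only the comparison of $\H_*(\ell^1(T),X)$ with $\H_*(\A(T),\widetilde X)$. Concretely, this should follow because on $X_0$ the central idempotent $\0$ acts as the identity from both sides, so that $X_0$ is effectively a bimodule over the unital algebra $\CC\0\cong\CC$; the relative Hochschild terms attached to the ideal are then controlled by the (trivial) homological dimension of $\CC$, forcing them to disappear in the excision sequence and producing the claimed isomorphisms.
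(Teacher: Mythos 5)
Your proposal follows essentially the same route as the paper: both apply the excision theorem of \cite{[LW]} (Theorem 4.5 there) to the admissible extension $0\to\CC\0\to\ell^1(T)\to\A(T)\to0$, using $\0X=X\0$ to make $\widetilde X$ a legitimate $\A(T)$-bimodule, and both conclude by observing that since $\CC\0\cong\CC$ the terms $\H_n(\CC\0,\0X)$ contributed by the ideal vanish in the resulting long exact sequence, so the remaining maps are isomorphisms. The only cosmetic difference is that the paper handles cohomology by a second direct application of the same theorem rather than by dualizing the homology argument.
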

\begin{proof} By  \cite [Theorem 4.5]{[LW]} we have the long exact sequence
$$
\cdots\to\H_n(\CC\0,\0 X)\to\H_n(\ell^1(T),X)\to\H_n(\A(T),\widetilde
X)\to\H_{n+1}(\CC\0,\0 X)\to\cdots.
$$
As $\CC\0\cong\CC$ we have $\H_n(\CC\0,\0 X)=\{0\}$ for all $n\geq0$,
yielding the claim. A similar application of  \cite [Theorem 4.5]{[LW]} to cohomology
gives the other statement.
\end{proof}

As a consequence, since our concern is to determine Hochschild
homology and cohomology, we shall work with reduced semigroup algebras
in the following.

\subsection{The Rees semigroup algebra}
For a Rees semigroup with index sets $I$ and $\Lambda$ over a group
$G$, set
$$
_iS_\lambda=\{i\}\times G\times\{\lambda\}\quad i\in
I,\,\lambda\in\Lambda.
$$
Then
$$
\A(S)=\bigoplus_{\substack{i\in I\\\lambda\in\Lambda}}\ell^1(
_iS_\lambda)
$$
is a decomposition of $\A(S)$ into an $\ell^1$-direct sum of
subalgebras such that
\begin{align*}
&\ell^1( _iS_\lambda)\cong\ell^1(G),\text{ if }p_{\lambda i}\neq\nul\\
&\ell^1( _iS_\lambda)\cdot\ell^1( _jS_\mu)=\{0\},\text{ if
}p_{\lambda j}=\nul.
\end{align*}
The isomorphism above is implemented by the semigroup
isomorphism
\[G\to _i\nolinebreak\! S_\lambda\colon g\mapsto (i,gp_{\lambda
  i}^{-1},\lambda).
  \]

Since $\ell^1( _iS_\lambda)\cdot\ell^1( _jS_\mu)\subseteq\ell^1(
_iS_\mu)$ this decomposition is organized as a rectangular band. We
further put
\begin{align*}
& _iS=\{i\}\times G\times\Lambda,\quad i\in I\\
& S_\lambda=I\times G\times\{\lambda\},\quad\lambda\in\Lambda
\end{align*}
so that $\ell^1( _iS)=\bigoplus_{\lambda\in\Lambda}\ell^1(
_iS_\lambda)$ for each $i\in I$ is a closed right
ideal of $\A(S)$ and  $\ell^1(S_\lambda)=\bigoplus_{i\in I}\ell^1(
_iS_\lambda)$ for each $\lambda\in\Lambda$ is a closed left
ideal of $\A(S)$.

For the remainder of this section we fix a Rees semigroup and
establish a number of generic properties.

\begin{Proposition} Each $\ell^1( _iS),\;i\in I$, is a left unital
  right ideal and each $\ell^1(S_\lambda),\;\lambda\in\Lambda$, is a
  right unital left ideal.
\end{Proposition}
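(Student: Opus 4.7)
My plan is to verify the ideal property directly from the multiplication rule, and then to exhibit the one-sided units explicitly using the hypothesis on the sandwich matrix $P$.

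First, for fixed $i\in I$, I would observe that by the multiplication rule $(i,g,\lambda)(j,h,\mu)$ is either $\0$ or of the form $(i,\,g p_{\lambda j}h,\,\mu)$, so the first coordinate $i$ is preserved whenever the product is non-zero. This shows that ${}_iS\cup\{\0\}$ is a right ideal of $S$, and hence $\ell^1({}_iS)$ is a closed right ideal of $\A(S)$. The argument for $S_\lambda$ (whose elements are preserved in the third coordinate under left multiplication) is symmetric.

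Next, to exhibit a left unit for $\ell^1({}_iS)$, I would use the hypothesis that the $i$-th column of $P$ contains a non-zero entry: pick $\lambda_0\in\Lambda$ with $p_{\lambda_0 i}\ne\nul$ and set
\[
e_i=(i,\;p_{\lambda_0 i}^{-1},\;\lambda_0)\in {}_iS.
\]
A one-line computation with the Rees product shows that for any $(i,g,\mu)\in {}_iS$,
\[
e_i\cdot(i,g,\mu)=(i,\;p_{\lambda_0 i}^{-1}p_{\lambda_0 i}g,\;\mu)=(i,g,\mu),
\]
so $e_i$ is an idempotent acting as a left identity on the dense span of ${}_iS$ in $\ell^1({}_iS)$, and therefore on all of $\ell^1({}_iS)$ by continuity and linearity. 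Symmetrically, for fixed $\lambda\in\Lambda$ I would use the non-zero entry in the $\lambda$-th row: choose $i_0\in I$ with $p_{\lambda i_0}\ne\nul$ and set $f_\lambda=(i_0,\,p_{\lambda i_0}^{-1},\,\lambda)$, which acts as a right identity on $\ell^1(S_\lambda)$ by the analogous calculation.

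There is no serious obstacle here; the only point requiring care is the bookkeeping with the sandwich matrix, namely making sure the inversion $p_{\lambda_0 i}^{-1}$ makes sense (it does, since $p_{\lambda_0 i}\in G$ whenever it is non-zero in $G^{\nul}$) and that the non-zero column/row hypothesis on $P$ is precisely what guarantees the existence of $\lambda_0$ and $i_0$. The continuity extension from the basis elements to $\ell^1({}_iS)$ and $\ell^1(S_\lambda)$ is routine because left and right multiplication by a fixed element of $\A(S)$ are bounded operators.
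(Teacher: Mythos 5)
Your proof is correct and follows essentially the same route as the paper: pick a non-zero entry of the sandwich matrix in the relevant column (resp.\ row), form the idempotent $e_i=(i,p_{\lambda_0 i}^{-1},\lambda_0)$ (resp.\ $f_\lambda=(i_0,p_{\lambda i_0}^{-1},\lambda)$), and verify by the Rees product that it acts as a one-sided identity, extending by linearity and continuity. The explicit check that ${}_iS\cup\{\0\}$ is a right ideal is a harmless addition that the paper treats as immediate from the earlier decomposition of $\A(S)$.
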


\begin{proof}
 Given an element of the indexing set $i \in I$ there is, by the property of the sandwich
matrix $P$,  a nonzero entry $p_{\mu i}$, for some $\mu\in
\Lambda$, (as each row and column has a non-zero entry).  We define
$e_i = ( i , p_{\mu i}^{-1}, \mu)$.
The element $e_i$ acts as a left identity for $s\in _i\!\!S$ as
 $$
e_i  s  = ( i , p_{\mu i}^{-1}, \mu) (i, g, \lambda)
     = ( i , p_{\mu i}^{-1} p_{\mu i} g, \lambda) = ( i , g, \lambda)
     = s.
$$
In particular $e_i$ is idempotent, and is clearly a left identity for
$\ell^1( _iS)$. Similarly on the right we have a non-zero element
$p_{\lambda j}$ of $P$, which gives the required element as $(j,
p_{\lambda j}^{-1}, \lambda)$.
 \end{proof}

Note that the idempotent $e_i$ is not necessarily unique as we can
form such an idempotent using any index from $\Lambda$ which gives a
non-zero entry in $P$. However, in what follows it will be useful to
have a fixed family of left and right  idempotents in mind.

\begin{Definition}
We
fix a family of left (and right) idempotents denoted $\{e_i\}_{i\in
I}$ (and $\{f_\lambda\}_{\lambda \in \Lambda}$), which are left
(respectively right) units for $\ell^1(_iS)$
(respectively $\ell^1(S_\lambda)$).
\end{Definition}

\begin{Proposition} \label{projp} For each $i\in I$, the right ideal $\ell^1( _iS)$
  is strictly projective in $\rmod\A(S)$ and, for each
  $\lambda\in\Lambda$, the left ideal $\ell^1(S_\lambda)$ is strictly
  projective in $\A(S)\lmod$.
\end{Proposition}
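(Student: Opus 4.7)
The plan is to exploit the idempotents $e_i$ (and $f_\lambda$) from the preceding definition to realize each ideal as a principal ideal generated by an idempotent, from which strict projectivity follows by a standard retraction argument that does not require admissibility of the epimorphism to be lifted against.

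First I would show that $\ell^1(_iS) = e_i\A(S)$ as right ideals. One inclusion is immediate: $e_i \in \ell^1(_iS)$ and $\ell^1(_iS)$ is a right ideal, so $e_i\A(S)\subseteq\ell^1(_iS)$. For the other, the left-identity property from the previous proposition gives $x = e_i x \in e_i\A(S)$ for every $x\in\ell^1(_iS)$. In particular $e_i$ is idempotent, as already noted.

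Next, to check strict projectivity of $\ell^1(_iS) = e_i\A(S)$ in $\rmod\A(S)$, let $q\colon Y\to Z$ be any bounded epimorphism of right $\A(S)$-modules (not assumed admissible) and $\phi\colon e_i\A(S)\to Z$ any bounded right module map. Pick any preimage $y_0\in Y$ of $\phi(e_i)$ and set $y = y_0 e_i$. Then $y$ lies in $Y\cdot e_i$, and
\[
q(y) = q(y_0)e_i = \phi(e_i)e_i = \phi(e_i^2) = \phi(e_i).
\]
Define $\tilde\phi\colon e_i\A(S)\to Y$ by $\tilde\phi(x) = y\cdot x$. This is bounded with $\|\tilde\phi\|\le\|y\|$, is clearly a right module map, and satisfies
\[
q(\tilde\phi(x)) = q(y)\cdot x = \phi(e_i)\cdot x = \phi(e_i\cdot x) = \phi(x),
\]
using that $\phi$ is a right module map and that $e_i$ is a left identity on $\ell^1(_iS)$. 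Hence $\tilde\phi$ solves the lifting problem, establishing strict projectivity. The statement for $\ell^1(S_\lambda) = \A(S)f_\lambda$ is entirely symmetric, with $f_\lambda$ acting as a right identity.

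There is really no substantial obstacle here: the argument is the standard ``principal ideal generated by an idempotent is projective'' fact, transported to the non-unital Banach-module setting. The only point worth emphasising is that admissibility of $q$ is not used anywhere—strictness comes for free because the lift $\tilde\phi(x) = yx$ is manufactured directly from a single preimage of $\phi(e_i)$, with no appeal to the open mapping theorem or to a splitting of $\ker q$.
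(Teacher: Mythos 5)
Your argument is correct and is essentially the paper's own proof: both choose a preimage $y$ of $\phi(e_i)$ under $q$ and define the lift by right multiplication, using that the idempotent $e_i$ is a left identity for $\ell^1({}_iS)=e_i\A(S)$, with no admissibility or open mapping theorem needed. Your identification of the ideal as $e_i\A(S)$ and direct formula $\tilde\phi(x)=yx$ is just a slightly cleaner packaging of the paper's extension of $\tilde\phi(e_is)=y_ie_is$ from basis elements via the universal property of $\ell^1$-spaces.
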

\begin{proof} We give the proof for $\ell^1( _iS)$ as the other is
  completely analogous. Let $q\colon Y\to Z$ be an epimorphism in
  $\rmod\A(S)$ and consider the lifting problem
\[
\xymatrix{
&\ell^1( _iS)\ar[d]^{\phi}&\\
Y\ar[r]^{q}&Z\ar[r]&0
}
\]
Choose $y_i\in Y$ so that $q(y_i)=\phi(e_i)$ and define for $s\in I\times
G\times\Lambda$
$$
\tilde\phi(e_is)=y_ie_is.
$$
As $_iS=e_i(I\times G\times \Lambda)$ in $\A(S)$ the universal property
of $\ell^1$-spaces provides a bounded linear map $\tilde\phi
\colon\ell^1( _iS)\to Y$ so that $q\circ\tilde\phi=\phi$. Clearly
$\tilde\phi$ is a right module map.
\end{proof}

\begin{Corollary} \label{proja}$\A(S)$ is strictly projective in
  $\rmod\A(S)$ and in $\A(S)\lmod$.
\end{Corollary}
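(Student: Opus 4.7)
My plan is to exploit the decomposition \(\A(S)=\bigoplus_{i\in I}\ell^1(_iS)\) as an \(\ell^1\)-direct sum of closed right ideals (and analogously \(\A(S)=\bigoplus_{\lambda\in\Lambda}\ell^1(S_\lambda)\) on the left). By Proposition \ref{projp} each summand is strictly projective in the appropriate module category, so the statement reduces to the fact that an \(\ell^1\)-coproduct of strictly projective modules is again strictly projective. I would treat the right-module case and indicate that the left case is completely symmetric.

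To verify the coproduct fact concretely, take any epimorphism \(q\colon Y\to Z\) in \(\rmod\A(S)\) (not required to be admissible) and any right module map \(\phi\colon\A(S)\to Z\). The first step is to produce for each \(i\in I\) an element \(y_i\in Y\) with \(q(y_i)=\phi(e_i)\); here the open mapping theorem applied to the surjection \(q\) lets me choose these preimages with a \emph{uniform} bound \(\|y_i\|\leq C\|\phi\|\), using that \(\|e_i\|=1\) and hence \(\|\phi(e_i)\|\leq\|\phi\|\). This uniformity is what allows the family of local lifts to assemble into a single bounded map on the whole \(\ell^1\)-sum.

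The second step is to define the lift by mimicking the construction in the proof of Proposition \ref{projp}. Since every basis element \(t=(i,g,\lambda)\) of \(\A(S)\) satisfies \(e_i t=t\), I set \(\tilde\phi(t)=y_i\cdot t\) and extend by \(\ell^1\)-linearity; the uniform norm bound on the \(y_i\) ensures this determines a bounded linear operator \(\tilde\phi\colon\A(S)\to Y\). Routine checks show \(\tilde\phi\) is a right \(\A(S)\)-module map (using that whenever \(ts\neq 0\) the first coordinate of \(ts\) is still \(i\)) and that \(q\circ\tilde\phi=\phi\) (since \(q(y_i\cdot t)=\phi(e_i)\cdot t=\phi(e_i t)=\phi(t)\)). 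The left-ideal decomposition, combined with the right unital idempotents \(f_\lambda\), handles strict projectivity of \(\A(S)\) as a left module in the same way.

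The only genuine subtlety is the uniform boundedness of the chosen preimages \(y_i\) when \(I\) is infinite, since without it the assembled map need not be bounded. The open mapping theorem resolves this, and no admissibility hypothesis is needed anywhere, which is precisely what makes the projectivity \emph{strict}.
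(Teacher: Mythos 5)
Your proposal is correct and follows essentially the same route as the paper: decompose $\A(S)=\bigoplus_{i\in I}\ell^1(_iS)$, use the open mapping theorem to choose uniformly bounded preimages $y_i$ of $\phi(e_i)$, and assemble the resulting uniformly bounded local lifts into a single bounded module map lifting $\phi$. The only cosmetic difference is that you define the lift directly on basis elements and extend by $\ell^1$-linearity, while the paper invokes the universal property of the $\ell^1$-direct sum to glue the lifts $\tilde\phi_i$ from Proposition \ref{projp}.
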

\begin{proof} We utilize the direct sum decomposition
  $\A(S)=\bigoplus_{i\in I}\ell^1( _iS)$ in $\rmod \A(S)$.
  Consider the lifting problem
\[
\xymatrix{
&\ell^1( _iS)\ar[d]^{\kappa_i} \ar[ddl]_{\tilde\phi_i}&\\
&\A(S)\ar[d]^{\phi}\ar@{-->}[dl]&\\
Y\ar[r]^{q}&Z\ar[r]&0
}
\]
where $\kappa_i,\;i\in I$, are the natural inclusions and
$\tilde\phi_i,\;i\in I$, are the lifts of $\phi\circ\kappa_i$
constructed in \ref{projp}. By the open mapping theorem applied to $q$, we can choose
the elements $y_i$ such that $\Vert y_i \Vert \le C$ for some constant $C$, and therefore such that
$\Vert\tilde\phi_i\Vert\leq C$ for all $i\in I$. Thus there is a
unique module map $\tilde\phi\colon\A(S)\to Y$ with
$\tilde\phi\circ\kappa_i=\tilde\phi_i$ for all $i\in I$. Since both
$q\circ\tilde\phi$ and $\phi$ complete the direct sum diagram for the
maps $\phi\circ\kappa_i$, it follows by uniqueness of universal
elements that $q\circ\tilde\phi=\phi$.

The case of left projectivity is completely analogous.
\end{proof}
\begin{Corollary}\label{hunital}
$\A(S)$ and $\ell^1(S)$ are H-unital. In particular $\A(S)$ and
$\ell^1(S)$ are self-induced.
\end{Corollary}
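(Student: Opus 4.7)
The plan is to prove H-unitality of $\A(S)$ directly from the strict right-projectivity established in Corollary~\ref{proja}, and then to derive H-unitality of $\ell^1(S)$ from the admissible algebra extension $0 \to \CC\0 \to \ell^1(S) \to \A(S) \to 0$. Self-inducedness of each algebra will follow as a formal consequence.

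For $\A(S)$, strict right-projectivity forces the right-$\A(S)$-module epimorphism $\mu \colon \A(S) \pot \A(S) \to \A(S)$ (which is surjective since $\A(S)^2 = \A(S)$) to split in $\rmod\A(S)$; let $\sigma \colon \A(S) \to \A(S) \pot \A(S)$ be a bounded right-$\A(S)$-linear section with $\mu\sigma = \mathrm{id}$. I would then define a chain contraction of the simplicial complex by
\[
s_n(a_0 \ot a_1 \ot \cdots \ot a_n) = \sigma(a_0) \ot a_1 \ot \cdots \ot a_n.
\]
Writing $\sigma(a_0) = \sum_k u_k \ot v_k$, a direct expansion of $b' s_n + s_{n-1} b'$ collapses: the degree-zero piece yields $a_0 \ot a_1 \ot \cdots \ot a_n$ via $\sum u_k v_k = a_0$; the pair of terms contracting at position $1$ cancels exactly by right-linearity $\sigma(a_0 a_1) = \sigma(a_0)\cdot a_1$; and every remaining pair telescopes. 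Hence the simplicial complex of $\A(S)$ is chain-contractible as Banach spaces, its dual splits, and $\A(S)$ is H-unital.

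For $\ell^1(S)$: the ideal $\CC\0 \cong \CC$ is unital, hence H-unital, and $\A(S)$ is H-unital by the previous step. The closure of the H-unital class under admissible Banach-algebra extensions (a standard consequence of Wodzicki's excision machinery, cf.~\cite{[Wod]}) then yields H-unitality of $\ell^1(S)$.

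Self-inducedness is immediate from H-unitality: the bottom of the chain contraction provides a bounded section of $\mu$ and forces $\ker \mu = \mathrm{im}\, b'$ to be closed, so the induced map $\A \pota \A \cong (\A \pot \A)/\ker \mu \to \A$ is a bounded isomorphism. The main obstacle is the $\ell^1(S)$ case, where a direct construction of a right-linear section of multiplication (analogous to the $\A(S)$ argument) is frustrated by $\0$-leakage, that is, products of non-$\0$ basis elements can land in $\CC\0$, which prevents the block-by-block prescription from being compatible with right-$\ell^1(S)$-linearity; the extension argument circumvents this by decoupling the H-unitality of the ideal $\CC\0$ from that of the quotient $\A(S)$.
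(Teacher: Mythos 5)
Your argument for $\A(S)$ is exactly the paper's: the bounded right-module section $\sigma$ of multiplication supplied by strict right projectivity (Corollary \ref{proja}) together with the homotopy $s_n=\sigma\ot\mathbf{1}$ is precisely the contracting homotopy $\rho\ot\mathbf{1}$ used in the paper, and your verification sketch matches its computation. For $\ell^1(S)$ you diverge: the paper does not invoke closure of the H-unital class under admissible extensions, but instead uses the reformulation that H-unitality is equivalent to $\H_n(\cdot,X)=\{0\}$ for all trivial bimodules $X$, observes that $\widetilde X=X$ for such $X$, and applies its Proposition \ref{reduced} (the Lykova--White long exact sequence for $0\to\CC\0\to\ell^1(S)\to\A(S)\to0$) to conclude. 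Your route is correct in substance, but it imports a stronger black box --- extension-closure of H-unitality in the bounded Banach setting, attributed to Wodzicki without proof --- which is not needed here: since the ideal is just $\CC\0\cong\CC$, the excision input you require is exactly the paper's already-proved Proposition \ref{reduced} applied to trivial coefficients, so you could (and should) cite that instead. One small imprecision: your justification of self-inducedness ("the bottom of the chain contraction provides a bounded section of $\mu$ and forces $\ker\mu=\mathrm{im}\,b'$") is available only for $\A(S)$, where you actually built a contraction; for $\ell^1(S)$ your extension argument produces no such homotopy, and you must argue instead from the dual-splitting form of H-unitality (the splitting gives a bounded left inverse of $\mu^*$, hence $\mu$ is onto, and exactness of the dual complex gives $\ker\mu=\overline{\mathrm{im}\,b'}$), or again from the trivial-module characterization. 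With that adjustment the proposal is sound, and the comparison is essentially: identical first half, and an excision-based second half packaged as a general closure principle where the paper uses its own, more economical, excision proposition.
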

\begin{proof} The statement about $\A(S)$ is a general fact about
  one-sided projective Banach algebras with surjective multiplication
  $\A\pot\A\to\A$. Let $\rho\colon\A\to\A\pot\A$ be a splitting of
  multiplication provided by right projectivity of $\A$. Then
  $\rho\ot\bold1\colon \A^{\pot n}\to \A^{\pot(n+1)}$ is a contracting
  homotopy of the simplicial complex:
\[
\xymatrix{0&\A\ar[l]\ar[d]_{\bold1}\ar[dr]^{\rho}&\A^{\pot 2}\ar[l]^{b}\ar[d]_{\bold1}\ar[dr]^{\rho\ot\bold1}&\cdots\ar[l]^{b}&\\
          0& \A\ar[l]                            &\A^{\pot2}\ar[l]^{b}   &\A^{\pot3}\ar[l]^{b}&\cdots\ar[l]
}
\]
For $a_1\ot\cdots\ot a_n\in\A^{\pot n}$
\begin{align*}
\rho\ot\bold1( b(a_1\ot\cdots\ot
a_n))&=\rho(a_1a_2)\ot a_3 \ot\cdots\ot a_n -\rho(a_1)\ot b(a_2\ot\cdots\ot a_n)\\
& =\rho(a_1)a_2\ot\cdots\ot a_n-\rho(a_1)\ot b(a_2\ot\cdots\ot a_n),\\
\end{align*}
and
\begin{align*}
b(\rho\ot\bold1(a_1\ot\cdots\ot a_n))&=a_1\ot\cdots\ot a_n\\
&\phantom{=}-\rho(a_1)a_2\ot\cdots\ot a_n +\rho(a_1)\ot b(a_2\ot\cdots\ot a_n).
\end{align*}

An equivalent formulation of H-unitality is that $\H_n(\A,X)=\{0\}$
for all trivial modules, i.e. modules with $\A X=X\A=\{0\}$. Since $\widetilde X=X$
for any trivial module the statements about $\ell^1(S)$ follow from \ref{reduced}
\end{proof}

\subsection{Morita equivalence}

Now fix one of the idempotents $e=(i,p_{\lambda i}^{-1},\lambda)$ and
put
$$
P=e\A(S)\text{ and }Q=\A(S)e,
$$
so that $P$ is the closed right ideal $\ell^1( _iS)$ and $Q$ is the
closed left ideal $\ell^1(S_\lambda)$. Let further
$$
\B=e\A(S)e,
$$
so that $\B=\ell^1( _iS_\lambda)\cong\ell^1(G)$.

For brevity in this section, let $\A=\A(S)$. Then $P\in
\B\bimod\A$ and $Q\in \A\bimod\B$. Our main result is

\begin{Theorem} The modules $P$ and $Q$ give a Morita equivalence of
  $\A$ and $\B$, that is, multiplication gives bimodule isomorphisms
\[
\A\cong Q\potb P\text{ and }\B\cong P\pota Q.
\]
\end{Theorem}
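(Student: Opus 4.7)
The plan is to construct explicit bounded inverses for the two multiplication maps $\mu_1\colon Q \potb P \to \A$ and $\mu_2\colon P \pota Q \to \B$. The key fact to exploit is that the idempotent $e = (i, p_{\lambda i}^{-1}, \lambda)$ is the unit of $\B$ (under the identification $\B \cong \ell^1(G)$ via $g \mapsto (i, g p_{\lambda i}^{-1}, \lambda)$), so that $eb = be = b$ for $b \in \B$, $ep = p$ for $p \in P$, and $qe = q$ for $q \in Q$. These one-sided unit identities are the workhorse of all the computations.

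For $P \pota Q \cong \B$, I would define $\psi\colon \B \to P \pota Q$ by $\psi(b) = b \otimes e$; this is well defined and bounded of norm $\leq \|e\|$ since $b \in \B \subseteq P$ and $e \in Q$. Then $\mu_2 \circ \psi(b) = b \cdot e = b$, and for the reverse composition, $\A$-balancing (applied to the factor $q \in Q \subseteq \A$) yields
\[
\psi \circ \mu_2(p \otimes q) = pq \otimes e = p \otimes qe = p \otimes q.
\]

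For $Q \potb P \cong \A$, I would use the factorisation $(j, g, \mu) = (j, g, \lambda) \cdot (i, p_{\lambda i}^{-1}, \mu)$, valid for every nonzero $(j, g, \mu) \in S$, with the first factor in $Q$ and the second in $P$. Define $\chi\colon \A \to Q \potb P$ on the natural basis by
\[
\chi((j, g, \mu)) = (j, g, \lambda) \otimes (i, p_{\lambda i}^{-1}, \mu),
\]
and extend by the $\ell^1$-universal property, so boundedness is automatic. Clearly $\mu_1 \circ \chi$ is the identity on basis vectors and hence on $\A$.

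The main work is in showing that $\chi \circ \mu_1$ is the identity on $Q \potb P$. On a rank-one tensor $q \otimes p$ with $q = (j, h, \lambda) \in Q$ and $p = (i, k, \mu) \in P$, one computes $\mu_1(q \otimes p) = (j, h p_{\lambda i} k, \mu)$, so the task reduces to proving the identity
\[
(j, h, \lambda) \otimes (i, k, \mu) = (j, h p_{\lambda i} k, \lambda) \otimes (i, p_{\lambda i}^{-1}, \mu)
\]
in $Q \potb P$. This is precisely where $\B$-balancing enters: taking $b = (i, k, \lambda) \in \B$, a direct computation gives $qb = (j, h p_{\lambda i} k, \lambda)$ and $b p' = (i, k, \mu)$ with $p' = (i, p_{\lambda i}^{-1}, \mu)$, so the relation $qb \otimes p' = q \otimes bp'$ delivers exactly what is needed. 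I expect the residual obstacle to be bookkeeping rather than conceptual: checking that all the maps are well-defined module homomorphisms and passing from rank-one tensors to the full projective tensor product by continuity and linearity, uniformly in the choice of indices.
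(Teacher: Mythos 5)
Your proof is correct and follows essentially the same route as the paper: the factorisation $(j,g,\mu)=(j,g,\lambda)(i,p_{\lambda i}^{-1},\mu)$ and the $\B$-balancing identity $(j,h,\lambda)\otimes(i,k,\mu)=(j,hp_{\lambda i}k,\lambda)\otimes(i,p_{\lambda i}^{-1},\mu)$ are exactly the computations in the paper's argument, as is the rewriting $pq\otimes e=p\otimes qe$ for $P\pota Q\cong\B$. The only (cosmetic) difference is that you package these identities as explicit bounded inverses $\psi$ and $\chi$, whereas the paper phrases them as surjectivity plus injectivity of the multiplication maps.
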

\begin{proof} Clearly the multiplication $P\pot Q\to \B$ is
  surjective, see \ref{surj}. Now suppose that
$$
\sum_n e a_n b_n e=0 \text{ for }\sum_n\Vert ea_n\Vert\Vert
b_ne\Vert<\infty,\;a_n,b_n\in\A.
$$
Then
$$
\sum_n ea_n\ota b_ne=\sum_n ea_nb_ne\ota e=0
$$
so that multiplication $P\pota Q\to \B$ is injective. It follows that
$P\pota Q\cong \B$.

For the reversed tensor product first note that
\[
(j,g,\mu)=(j,g,\lambda)(i,p_{\lambda i}^{-1},\mu)
\]
for all $j\in I,g\in G,\mu\in\Lambda$, so that the multiplication
$Q\potb P\to\A$ is surjective.

Identifying $Q\pot P$ with $\ell^1(Se\times eS)$ a generic element in
$Q\pot P$ has the form
$$
\sum_{j,g,h,\mu}\alpha_{jgh\mu}(j,g,\lambda)\ot(i,h,\mu).
$$
Assume that
\[
\sum_{j,g,h,\mu}\alpha_{jgh\mu}(j,g,\lambda)(i,h,\mu)=0.
\]

This means that

\[
\sum_{\substack{g,h\\gp_{\lambda i}h=\gamma}}\alpha_{jgh\mu}=0
\]
for each $j\in I,\gamma\in G,\mu\in\Lambda$. Now
\begin{align*}
(j,g,\lambda)\otb(i,h,\mu)&=(j,g,\lambda)\otb(i,h,\lambda)(i,p_{\lambda
  i}^{-1},\mu)\\
&=(j,g,\lambda)(i,h,\lambda)\otb(i,p_{\lambda
  i}^{-1},\mu)\\
&=(j,gp_{\lambda i}h,\lambda)\otb(i,p_{\lambda
  i}^{-1},\mu),
\end{align*}
so
$$
\sum_{j,g,h,\mu}\alpha_{jgh\mu}(j,g,\lambda)\otb(i,h,\mu)=
   \sum_{\gamma,j,\mu}
      \big[  \sum_{\substack{g,h\\gp_{\lambda i}h=\gamma}}\alpha_{jgh\mu} \big]
    (j,\gamma,\lambda)\otb(i,p_{\lambda i}^{-1},\mu)=0.
$$
It follows that multiplication $Q\potb P\to\A$ is injective so that
$Q\potb P\cong\A$. All together, $\A$ and $\B$ are Morita equivalent.
\end{proof}

We want to establish Morita invariance of Hochschild homology. We have
already noted that $P$ is strictly projective in $\moda$. We now prove

\begin{Theorem}
The module $P=e\A$ is strictly projective in $\Bmod$.
\end{Theorem}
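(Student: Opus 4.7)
The plan is to exhibit $P$ as an $\ell^1$-direct sum of left $\B$-modules each isomorphic to $\B$, and then mimic the assembly argument of Corollary~\ref{proja} to obtain strict projectivity. First I would note that $P=e\A=\ell^1( _iS)=\bigoplus_{\mu\in\Lambda}\ell^1( _iS_\mu)$ as an $\ell^1$-direct sum, with each summand $\ell^1( _iS_\mu)$ a closed left $\B$-submodule: indeed, $(i,g,\lambda)(i,h,\mu)=(i,gp_{\lambda i}h,\mu)$ remains in $ _iS_\mu$ and is never $\0$, since $p_{\lambda i}\neq\nul$.

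Next, I would identify each summand with $\B$ in $\Bmod$. For each $\mu$ pick any basis element $u_\mu\in\, _iS_\mu$; then the left $\B$-module map $\B\to\ell^1( _iS_\mu)$, $b\mapsto bu_\mu$, sends the natural basis of $\B$ to that of $\ell^1( _iS_\mu)$ by right multiplication on $G$ by a fixed group element, which is a bijection. Hence the map is an isometric isomorphism, so $\ell^1( _iS_\mu)\cong\B$. Because $\B$ is unital (with unit $e$) it is strictly projective in $\Bmod$: any lifting problem $q\colon Y\to Z$ epi, $\phi\colon\B\to Z$, is solved by $\tilde\phi(b)=by$ for any $y\in q^{-1}(\phi(e))$. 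Via the isomorphism, each summand $\ell^1( _iS_\mu)$ is therefore strictly projective in $\Bmod$.

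To assemble, given an epi $q\colon Y\to Z$ and a $\B$-module map $\phi\colon P\to Z$, I would apply the open mapping theorem to $q$ to obtain $C>0$ such that every $z\in Z$ has a preimage of norm at most $C\|z\|$. For each $\mu\in\Lambda$, choose $y_\mu\in q^{-1}(\phi(u_\mu))$ with $\|y_\mu\|\leq C\|\phi\|$, and let $\tilde\phi_\mu\colon\ell^1( _iS_\mu)\to Y$ be the $\B$-module map $bu_\mu\mapsto by_\mu$. The bounds $\|\tilde\phi_\mu\|\leq C\|\phi\|$ are uniform in $\mu$, so by the universal property of the $\ell^1$-direct sum the $\tilde\phi_\mu$ assemble to a bounded $\B$-module map $\tilde\phi\colon P\to Y$ satisfying $q\circ\tilde\phi=\phi$. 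The only conceptual step is the direct-sum decomposition of $P$; once $P$ is recognized as a free $\ell^1$-sum of copies of $\B$, the rest is completely parallel to Corollary~\ref{proja}, and I anticipate no serious obstacle, since strict projectivity (as opposed to admissible projectivity) requires no complementation hypothesis on $\ker q$ beyond surjectivity of $q$.
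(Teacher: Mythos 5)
Your proposal is correct and takes essentially the same approach as the paper: the same decomposition $P=\bigoplus_{\mu}\ell^1({}_iS_\mu)$ in $\Bmod$, summand-wise lifts with uniform bounds from the open mapping theorem, and assembly via the universal property as in Corollary~\ref{proja}. The only cosmetic difference is that you make explicit the module isomorphism $\ell^1({}_iS_\mu)\cong\B$ and invoke strict projectivity of the unital algebra $\B$ over itself, whereas the paper constructs the lift directly by $\tilde\phi_\mu((i,g,\mu))=(i,g,\lambda)y_\mu$, which amounts to the same thing with $u_\mu=(i,p_{\lambda i}^{-1},\mu)$.
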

\begin{proof} Consider the direct sum decomposition in $\Bmod$
$P=\bigoplus_\mu \ell^1( _iS_\mu)$. Let
\[
\xymatrix{
&\ell^1( _iS_\mu)\ar[d]^{\phi_\mu}\ar@{-->}[dl]&\\
Y\ar[r]^{q}&Z\ar[r]&0
}
\]
be a lifting problem, where $\phi_\mu$ is the restriction of $\phi:P\to Z$.
From the open mapping theorem for $q$, there exists $y_\mu$ with
$$
\Vert y_\mu\Vert\leq C\text{ and }q(y_\mu)=\phi_\mu((i,p_{\lambda
  i}^{-1},\mu)),
$$
for some constant $C$. Define $\tilde\phi_\mu\colon\ell^1( _iS_\mu)\to Y$ by
$$
\tilde\phi_\mu((i,g,\mu))=(i,g,\lambda)y_\mu.
$$
Then $\tilde\phi_\mu\in\Bmod$ and $\Vert\tilde\phi_\mu\Vert\leq C$. Since
\begin{align*}
q(\tilde\phi_\mu((i,g,\mu))&=q((i,g,\lambda)y_\mu)\\
&=(i,g,\lambda)q(y_\mu)\\
&=(i,g,\lambda)\phi_\mu((i,p_{\lambda i}^{-1},\mu))\\
&=\phi_\mu((i,g,\lambda)(i,p_{\lambda i}^{-1},\mu))\\
&=\phi_\mu((i,g,\mu))
\end{align*}
we have solved the lifting problem. Proceeding as in \ref{proja} we
conclude that $P$ as a direct sum of strictly projective modules is
strictly projective in $\Bmod$.
\end{proof}

\section{Applications to homological properties}

With $P$, $Q$, $\A=\A(S)$, and $\B=e\A(S)e$ as in the previous section
we have functors
\begin{align*}
&\A(S)\bimod \A(S)\to\Bmod\B\colon X\mapsto P\pota X\pota Q,\\
&\B\bimod\B\to\A(S)\bimod\A(S)\colon Y\mapsto Q\potb Y\potb Q.
\end{align*}

Replacing $\B$ by the isomorphic $\ell^1(G)$ we get functors
\begin{align*}
&\Phi\colon\A(S)\bimod\A(S)\to\ell^1(G)\bimod\ell^1(G),\\
&\Gamma\colon\ell^1(G)\bimod\ell^1(G)\to\A(S)\bimod\A(S).
\end{align*}
We collect our findings in
\begin{Theorem}\label{main} The functors $\Phi$ and $\Gamma$ constitute an equivalence of
  the full subcategories of induced bimodules over $\A(S)$ and
  $\ell^1(G)$ and there are natural isomorphisms of (co-)homology functors
\begin{align*}
&\H_n(\A(S),X)\cong\H_n(\ell^1(G),\Phi(X)),\\
&\H^n(\A(S),X^*)\cong\H^n(\ell^1(G),\Phi(X)^*).
\end{align*}
\end{Theorem}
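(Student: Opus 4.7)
The plan is to assemble the two parts of the statement from ingredients already in place: the Morita equivalence between $\A(S)$ and $\B$ (with $\B\cong\ell^1(G)$) established just above, and the strict projectivity of the implementing module $P=e\A(S)$ as a right $\A(S)$-module (Proposition \ref{projp}) and as a left $\B$-module (the immediately preceding theorem). Strict projectivity in the Banach-bimodule sense implies flatness with respect to admissible short exact sequences (direct-summand-of-free argument), which is precisely the hypothesis needed to invoke Theorem \ref{invariance}.

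For the homological half of the statement I would simply apply Theorem \ref{invariance} to this Morita pair: for every induced $X\in\A(S)\bimod\A(S)$ it provides the natural isomorphisms
\[
\H_n(\A(S),X)\cong\H_n(\B,P\pota X\pota Q)\text{ and }\H^n(\A(S),X^*)\cong\H^n(\B,(P\pota X\pota Q)^*),
\]
which become the stated isomorphisms after identifying $\B$ with $\ell^1(G)$ and recalling that $\Phi(X)=P\pota X\pota Q$, so that $\Phi(X)^*=(P\pota X\pota Q)^*$.

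For the categorical equivalence, the argument follows the classical Morita pattern. Given an induced $X\in\A(S)\bimod\A(S)$, iterated associativity of the completed projective tensor product together with the isomorphism $Q\potb P\cong\A(S)$ yields
\[
\Gamma(\Phi(X))=Q\potb(P\pota X\pota Q)\potb P\cong(Q\potb P)\pota X\pota(Q\potb P)\cong\A(S)\pota X\pota\A(S)\cong X,
\]
the last step being exactly the induced hypothesis on $X$. The composition $\Phi\circ\Gamma$ is handled symmetrically using $P\pota Q\cong\B$, and naturality of the resulting unit and counit is automatic from the functorial construction of $\Phi$ and $\Gamma$. One also needs to verify that $\Phi$ and $\Gamma$ preserve the property of being induced; this follows from the same tensor-product rearrangements together with the fact that $P\in\bmoda$ and $Q\in\amodb$ are themselves induced, as required by the very definition of Morita equivalence.

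The main, and essentially the only non-formal, technical point is the bookkeeping of the left and right module actions through the iterated associativity isomorphisms $(Q\potb P)\pota X\cong Q\potb(P\pota X)$ and their dual versions; once these standard rearrangements of $\widehat{\otimes}$ for Banach bimodules over Banach algebras are accepted, the whole proof reduces to clean unwinding of the Morita data plus a direct appeal to Theorem \ref{invariance}.
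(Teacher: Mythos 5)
Your proposal is correct and follows essentially the same route as the paper: the categorical equivalence via the associativity rearrangements $Q\potb(P\pota X\pota Q)\potb P\cong\A(S)\pota X\pota\A(S)\cong X$ (and its mirror image), and the (co-)homology isomorphisms by feeding the strict projectivity of $P$ in $\rmod\A(S)$ and in $\Bmod$ — hence its flatness — into Theorem \ref{invariance}. The only difference is that you spell out the projective-implies-flat step and the preservation of inducedness, which the paper leaves implicit.
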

\begin{proof} The equivalence follows from the natural isomorphisms
\begin{align*}
&Q\potb( P\pota X\pota Q)\potb P\cong\A\pota X\pota \A \cong X,\\
&P\pota( Q\potb Y\potb P)\pota Q\cong\B\potb Y\potb \B \cong Y
\end{align*}
for induced modules $X\in \A(S)\bimod\A(S)$ and $Y\in
\B\bimod\B$. As $P$ is strictly projective in $\Bmod$ and in
$\rmod \A(S)$ the statement about (co-)homology groups follows
from \ref{invariance}.
\end{proof}
We recall that $S$ is a Rees semigroup with underlying group $G$. We note a number of consequences.
\begin{Corollary} There are isomorphisms
\begin{align*}
  &\H_n(\A(S),\A(S))\cong\H_n(\ell^1(S),\ell^1(S))\cong\\
 &\H_n(\A(S)^\#,\A(S)^\#)\cong\H_n(\ell^1(S)^\#,\ell^1(S)^\#)\cong\\
 &\H_n(\ell^1(G),\ell^1(G))
\end{align*}
and
\begin{align*}
  &\H^n(\A(S),\A(S)^*)\cong\H^n(\ell^1(S),\ell^1(S)^*)\cong\\
 &\H^n(\A(S)^\#,(\A(S)^\#)^*)\cong\H^n(\ell^1(S)^\#,(\ell^1(S)^\#)^*)\cong\\
 &\H^n(\ell^1(G),\ell^1(G)^*)
\end{align*}
for $n\geq0$.
\end{Corollary}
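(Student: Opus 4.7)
The plan is to chain three isomorphism results to traverse the five groups in the statement: (i) Morita invariance (Theorem \ref{main}) to link the $\A(S)$-coefficient groups with the $\ell^1(G)$-coefficient groups; (ii) the reduced-module Proposition \ref{reduced} to link $\ell^1(T)$- and $\A(T)$-coefficients for $T=S$ and for $T=S^\sharp$; and (iii) the excision theorem of \cite{[LW]}, driven by H-unitality from Corollary \ref{hunital}, to bridge each algebra with its forced unitization.

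For step (i), since $\A(S)$ is self-induced by Corollary \ref{hunital}, the bimodule $X=\A(S)$ is induced, so Theorem \ref{main} applies. Using that $P$ and $Q$ are induced (so that $P\pota \A(S)\cong P$ and $\A(S)\pota Q\cong Q$) and the Morita isomorphism $P\pota Q\cong\B$, one obtains $\Phi(\A(S))=P\pota\A(S)\pota Q\cong P\pota Q\cong\B\cong\ell^1(G)$. This yields $\H_n(\A(S),\A(S))\cong\H_n(\ell^1(G),\ell^1(G))$ and the cohomology analogue with dual coefficients.

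For step (ii), applying Proposition \ref{reduced} to $T=S$ and $X=\ell^1(S)$ gives $\H_n(\ell^1(S),\ell^1(S))\cong\H_n(\A(S),\A(S))$: the absorbing property of $\0$ forces $\0 X=\CC\0=X\0$ and $\widetilde X=\ell^1(S)/\CC\0=\A(S)$. Now let $S^\sharp$ denote $S$ with an external identity adjoined; then $\0$ remains absorbing in $S^\sharp$, so $\ell^1(S^\sharp)\cong\ell^1(S)^\sharp$ and $\A(S^\sharp)\cong\A(S)^\sharp$, and a second application of Proposition \ref{reduced} gives $\H_n(\ell^1(S)^\sharp,\ell^1(S)^\sharp)\cong\H_n(\A(S)^\sharp,\A(S)^\sharp)$.

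For step (iii), H-unitality of $\A(S)$ (Corollary \ref{hunital}) together with the \cite{[LW]} excision long exact sequence, applied to the admissible extension $0\to\A(S)\to\A(S)^\sharp\to\CC\to 0$ with coefficient $\A(S)^\sharp$, gives $\H_n(\A(S),\A(S))\cong\H_n(\A(S)^\sharp,\A(S)^\sharp)$; the same argument applies with $\ell^1(S)$ in place of $\A(S)$. The main obstacle is ensuring that the $\CC$-coefficient terms appearing in the excision long exact sequence vanish in the required degrees, so that the LES collapses to the desired isomorphism; the symmetric behavior of the unitization on both sides of the Morita equivalence keeps the entire chain consistent.
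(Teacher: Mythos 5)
Your chain of isomorphisms is essentially the paper's own proof: the link $\H_n(\A(S),\A(S))\cong\H_n(\ell^1(S),\ell^1(S))$ comes from Proposition~\ref{reduced}, the links with the forced unitizations come from H-unitality (Corollary~\ref{hunital}) via the excision sequence of \cite{[LW]}, and the link with $\ell^1(G)$ comes from Theorem~\ref{main} together with $\Phi(\A(S))\cong P\pota\A(S)\pota Q\cong P\pota Q\cong\B\cong\ell^1(G)$. The only deviation is your second application of Proposition~\ref{reduced} to $S$ with an identity adjoined (using $\ell^1(S^\sharp)\cong\ell^1(S)^\sharp$, $\A(S^\sharp)\cong\A(S)^\sharp$ and $\0\,\ell^1(S^\sharp)=\CC\0=\ell^1(S^\sharp)\,\0$); this is correct and simply replaces one of the two H-unitality steps in the paper's argument, so it changes nothing essential.

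The one point you should not leave as an ``obstacle'' is the vanishing of the $\CC$-terms in step (iii). Here the quotient algebra is $\CC$ and the quotient coefficient module is $\A(S)^\sharp/\A(S)\cong\CC$, on which the identity acts as the identity; hence $\H_n(\CC,\CC)=\{0\}$ and $\H^n(\CC,\CC^*)=\{0\}$ for all $n\geq1$, and the long exact sequence collapses to the desired isomorphisms in positive degrees. These terms do \emph{not} vanish in degree zero ($\H_0(\CC,\CC)\cong\CC\cong\H^0(\CC,\CC^*)$); indeed for any Banach algebra $\A$ one has $\H_0(\A^\#,\A^\#)\cong\H_0(\A,\A)\oplus\CC$ and $\H^0(\A^\#,(\A^\#)^*)\cong\H^0(\A,\A^*)\oplus\CC$, so the unitized entries in the chain agree with the others only for $n\geq1$ (for trivial $G$ and finite index sets one gets $\CC$ against $\CC^2$ in degree zero). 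The paper's one-line appeal to H-unitality carries the same degree-zero caveat, so your proof is not worse off; just make the restriction to $n\geq1$ for the unitization links explicit rather than leaving the collapse of the sequence as an unresolved issue.
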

\begin{proof}
The proof for homology and cohomology are identical.  Since the reduced
module of $\ell^1(S)$ is $\A(S)$, in both cases the first isomorphism
follows from \ref{reduced}.  The next two isomorphisms follow from
H-unitality, cf. \ref{hunital}. Finally the last isomorphism
is a consequence of \ref{main} since $\Phi(\A(S))=\ell^1(G)$.
\end{proof}

Recall that a Banach algebra $\A$ is \emph{weakly amenable} if
$\H^1(\A,\A^*)=\{0\}$.

\begin{Corollary} The algebras $\ell^1(S)^\#$, $\ell^1(S)$, $\A(S)^\#$,
  and $\A(S)$ are all weakly amenable.
\end{Corollary}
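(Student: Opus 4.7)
The plan is to reduce everything to the weak amenability of $\ell^1(G)$, which is a classical theorem of Johnson stating that $\ell^1(G)$ is weakly amenable for every discrete group $G$.

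First I would specialize the previous corollary to the case $n = 1$. This gives a chain of isomorphisms
\[
\H^1(\A(S),\A(S)^*)\cong\H^1(\ell^1(S),\ell^1(S)^*)\cong\H^1(\A(S)^\#,(\A(S)^\#)^*)\cong\H^1(\ell^1(S)^\#,(\ell^1(S)^\#)^*)\cong\H^1(\ell^1(G),\ell^1(G)^*).
\]
Thus the weak amenability of all four algebras in the statement follows simultaneously once we know that the right-hand side vanishes.

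Second, I would invoke Johnson's theorem that $\ell^1(G)$ is weakly amenable for every (discrete) group $G$, giving $\H^1(\ell^1(G),\ell^1(G)^*)=\{0\}$. Combining this with the isomorphisms above yields the conclusion. Since the corollary immediately preceding this one has already done the heavy lifting (namely, assembling Theorem \ref{main}, Proposition \ref{reduced}, and Corollary \ref{hunital} to identify the cohomology of each of the four algebras with the cohomology of $\ell^1(G)$), essentially no additional argument is required.

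There is no real obstacle here; the only subtlety is that one must make sure Johnson's result is indeed applicable in this exact form, i.e.\ that the coefficient module $\ell^1(G)^*$ with the standard bimodule structure is the one for which Johnson's theorem gives vanishing of the first cohomology. This is the standard formulation of weak amenability of group algebras, so it applies directly.
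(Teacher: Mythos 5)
Your proposal is correct and matches the paper's own argument: the paper likewise specializes the preceding corollary's chain of isomorphisms to $n=1$ and quotes Johnson's theorem on weak amenability of $\ell^1(G)$ to conclude. No further comment is needed.
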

\begin{proof}
The Banach algebra $\ell^1(G)$ is weakly amenable \cite{[BEJ2]}.
\end{proof}

Recall that a Banach algebra $\A$ is \emph{biprojective} if
multiplication $\Pi\colon \A\pot\A\to\A$ has a right inverse in
$\amoda$, and is \emph{biflat} if the dual of multiplication
$\Pi^*\colon\A^*\to(\A\pot\A)^*$ has a left inverse in $\amoda$.

\begin{Corollary}\label{biflat}
$\A(S)$ is biflat if and only if $G$ is amenable.
\end{Corollary}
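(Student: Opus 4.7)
The proof plan is to combine the Morita equivalence machinery of the previous section with Johnson's classical theorem on amenability of group algebras.

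First I would observe that biflatness is a Morita invariant for self-induced Banach algebras in the setting of Theorem \ref{main}. Concretely, given a bimodule retraction $\sigma\colon \B^* \to (\B \pot \B)^*$ of the dualized multiplication $\Pi_\B^*$ in $\B\bimod\B$, the flatness of $P$ in both $\rmod\A(S)$ and $\Bmod$ permits one to transport $\sigma$ through a Waldhausen-type construction parallel to the proof of Theorem \ref{invariance}, yielding a bimodule retraction of $\Pi_\A^*$ in $\A(S)\bimod\A(S)$. The construction is symmetric, so biflatness of either algebra forces biflatness of the other.

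With Morita invariance of biflatness in hand, the corollary is immediate. Suppose first that $G$ is amenable. By Johnson's theorem $\ell^1(G)$ is amenable, and because $\ell^1(G)$ is unital the virtual diagonal supplies a $\B$-bimodule splitting of $\Pi_\B^*$, so $\ell^1(G)$ is biflat. Morita invariance then gives biflatness of $\A(S)$. Conversely, if $\A(S)$ is biflat, Morita invariance forces $\ell^1(G)$ to be biflat; unitality of $\ell^1(G)$ promotes this to amenability (a biflat unital algebra with BAI admits a virtual diagonal), and Johnson's theorem returns amenability of $G$.

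The main obstacle is verifying that biflatness is genuinely a Morita invariant in the present framework. The argument is in the spirit of Theorem \ref{invariance} but is performed one degree lower in the bar resolution, and one must track carefully that the transported retraction is an honest $\A$-bimodule map and is compatible with the $\B$-balanced tensor products; both points rely crucially on the two-sided flatness of $P$ established in the previous section. Once that transfer is made precise, the chain of equivalences $\A(S)\text{ biflat}\iff\ell^1(G)\text{ biflat}\iff\ell^1(G)\text{ amenable}\iff G\text{ amenable}$ closes the argument.
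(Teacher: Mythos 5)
Your chain of equivalences is the right one, but the pivotal step --- that biflatness transfers across the Morita equivalence --- is asserted rather than proved, and your sketch of how to prove it does not go through as stated. The Waldhausen double complex in Theorem \ref{invariance} computes (co-)homology with coefficients in \emph{induced} bimodules; it is not a device for transporting a specific bimodule retraction $\sigma\colon\B^*\to(\B\pot\B)^*$ of $\Pi_\B^*$ into a retraction of $\Pi_\A^*$, and ``performing the argument one degree lower'' is not a routine modification: one would have to produce an honest $\A(S)$-bimodule left inverse of $\Pi_{\A(S)}^*$ from $\sigma$, which requires identifying how the functors of Theorem \ref{main} interact with $\A(S)\pot\A(S)$ (outer actions) and with dualization --- exactly the delicate point. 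The paper itself warns that the companion statement for biprojectivity is \emph{not} immediate from Morita theory because its cohomological description involves non-induced modules; this is a signal that transfer of splitting-type properties cannot simply be waved through the machinery, and your proposal leaves precisely this ``main obstacle'' unresolved.

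The missing idea is a cohomological characterization of biflatness in terms of Morita-invariant data: by Selivanov \cite[Theorem 5.8.(i)]{[Sel]}, a Banach algebra $\A$ is biflat if and only if it is self-induced and $\H^1(\A,X^*)=\{0\}$ for every induced bimodule $X$. Since $\A(S)$ and $\ell^1(G)$ are self-induced (Corollary \ref{hunital}) and Theorem \ref{main} gives $\H^1(\A(S),X^*)\cong\H^1(\ell^1(G),\Phi(X)^*)$ together with an equivalence of the categories of induced bimodules, the vanishing condition holds for $\A(S)$ if and only if it holds for $\ell^1(G)$; this is the rigorous substitute for your claimed direct transport of $\sigma$. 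From there your endgame is fine and agrees with the paper: $\ell^1(G)$ is unital, so it is biflat if and only if it is amenable, and by Johnson \cite{[BEJ1]} this happens exactly when $G$ is amenable. So either supply a genuine proof of Morita invariance of biflatness (essentially reproving Selivanov's criterion) or, more economically, quote it as the paper does.
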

\begin{proof} By \cite[Theorem 5.8.(i)]{[Sel]} a Banach algebra $\A$
  is biflat if and only if it is self-induced and $\H^1(\A,X^*)=\{0\}$
  for all induced modules $X$. As $\ell^1(G)$, being unital, is biflat
  if and only if it is amenable, the result follows from
  \cite{[BEJ1]}.
\end{proof}

The corresponding result for biprojectivity is not immediate from
Morita theory as a description of biprojectivity in terms of Hochschild
cohomology involves non-induced modules. But we
can give a direct proof of

\begin{Theorem}
$\A(S)$ is biprojective if and only if $G$ is finite.
\end{Theorem}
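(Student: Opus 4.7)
The plan is to prove each implication by direct construction. For the ``if'' direction, fix $i_0 \in I$ and $\lambda_0 \in \Lambda$ with $p_0 := p_{\lambda_0 i_0} \neq \nul$ and define $\rho : \A(S) \to \A(S) \pot \A(S)$ on the canonical basis by
\[
\rho(i, g, \lambda) \,=\, \frac{1}{|G|} \sum_{h \in G} (i, g h^{-1} p_0^{-1}, \lambda_0) \ot (i_0, h, \lambda).
\]
This makes sense precisely because $|G| < \infty$, it is bounded with $\|\rho\| \leq 1$, and $\Pi \circ \rho = \id{\A(S)}$ is immediate from $p_0^{-1} p_{\lambda_0 i_0} = 1$. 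Bimodularity is a case analysis according to whether the relevant sandwich entries are non-$\nul$: on the left, $(j,k,\mu)\rho(i,g,\lambda)$ matches $\rho((j,k,\mu)(i,g,\lambda))$ by pulling $p_{\mu i}$ inside the sum; on the right, the substitution $h \mapsto h p_{\lambda j} k$ in the summation index identifies $\rho(i,g,\lambda)(j,k,\mu)$ with $\rho((i,g,\lambda)(j,k,\mu))$.

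For the converse, assume $\A := \A(S)$ is biprojective with bimodule splitting $\rho : \A \to \A \pot \A$. Retain the notation $e = (i_0, p_0^{-1}, \lambda_0)$, $P = e\A$, $Q = \A e$, and $\B = e\A e \cong \ell^1(G)$. Since $\rho$ is a bimodule map and $x = exe$ for $x \in \B$, we have $\rho(x) = e\rho(x)e \in e\A \pot \A e = P \pot Q$, so $\rho|_\B : \B \to P \pot Q$ is a bounded $\B$-bimodule map splitting the restriction $\Pi_\A|_{P \pot Q} : P \pot Q \to \B$. Using the $\ell^1$-direct sum decomposition
\[
P \pot Q \,=\, \bigoplus_{\lambda, i} \ell^1({}_{i_0} S_\lambda) \pot \ell^1({}_i S_{\lambda_0}),
\]
I construct a bounded $\B$-bimodule map $\Phi : P \pot Q \to \B \pot \B$ componentwise, taking $\Phi = 0$ on summands with $p_{\lambda i} = \nul$ and otherwise
\[
\Phi\bigl((i_0, g, \lambda) \ot (i, h, \lambda_0)\bigr) \,=\, (i_0, g p_{\lambda i} p_0^{-1}, \lambda_0) \ot (i_0, h, \lambda_0).
\]
A routine verification shows each component is a $\B$-bimodule map of norm at most $1$ with $\Pi_\B \circ \Phi = \Pi_\A$ on $P \pot Q$. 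Hence $\Phi \circ \rho|_\B : \B \to \B \pot \B$ is a bounded $\B$-bimodule splitting of $\Pi_\B$, so $\B \cong \ell^1(G)$ is biprojective, and the finiteness of $G$ follows from the classical fact that any $\ell^1(G)$-central element of $\ell^1(G) \pot \ell^1(G) = \ell^1(G \times G)$ must be of the form $(g, h) \mapsto f(hg)$, whose $\ell^1$-norm is $|G|$ times that of $f$.

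The principal obstacle is the one flagged just before the theorem: biprojectivity is not directly inherited through the Morita equivalence of Theorem \ref{invariance}, because it involves non-induced bimodules outside the scope of that theorem. The transport $\Phi$ above substitutes for a Morita-theoretic argument; its construction relies on the fact that every non-$\nul$ entry of the sandwich matrix allows the corresponding corner $\ell^1({}_{i_0}S_\lambda) \pot \ell^1({}_iS_{\lambda_0})$ to be identified, as a $\B$-bimodule and compatibly with the respective multiplications, with $\B \pot \B$.
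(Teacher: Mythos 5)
Your proof is correct and follows essentially the same route as the paper: the identical averaged formula gives the splitting when $G$ is finite, and for the converse you use the same idea of restricting $\rho$ to the corner $\B=e\A(S)e$, decomposing $e\A(S)\pot\A(S)e$ into blocks indexed by $(\lambda,i)$, and transporting into $\B\pot\B$ by inserting $p_{\lambda i}p_0^{-1}$ on the non-$\nul$ blocks while killing the blocks with $p_{\lambda i}=\nul$. The only cosmetic differences are that the paper corrects the single element $\rho(e)$ (via a flip-product multiplication by chosen elementary tensors) to obtain a diagonal for $\B$, whereas you package the same blockwise correction as a global $\B$-bimodule map $\Phi\colon P\pot Q\to\B\pot\B$ and compose with $\rho|_\B$, and that you spell out the classical computation showing a central element of $\ell^1(G\times G)$ with nonzero product forces $\vert G\vert<\infty$, which the paper merely cites.
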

\begin{proof} Assume that $\vert G\vert<\infty$. Choose an idempotent
  $e=(i,p_{\lambda i}^{-1},\lambda)$ and define
  $\rho\colon\A(S)\to\A(S)\pot\A(S)$ by
$$
\rho((j,g,\mu))=\frac1{\vert G\vert}\sum_{h\in G}(j,ghp_{\lambda
  i}^{-1},\lambda)\ot(i,h^{-1},\mu),\quad (j,g,\mu)\in I\times
G\times\Lambda.
$$
Then clearly $\Pi\circ \rho=\bold1$. One checks, as in the proof of
biprojectivity of group algebras over finite groups, that
$$
(j,g,\mu)\rho((j',g',\mu'))=\rho((j,g,\mu)(j',g',\mu'))=\rho((j,g,\mu))(j',g',\mu')
$$
for all $ (j,g,\mu),\,(j',g',\mu')\in I\times G\times\Lambda$, so that
$\rho$ is a bimodule homomorphism.

Conversely, suppose that $\A(S)$ is biprojective, and let
$\A(S)\to\A(S)\pot\A(S)$ be a splitting of multiplication. Consider
its restriction $\rho\colon e\A(S)e\to e\A(S)\pot\A(S)e$. Since $\rho$
is a bimodule homomorphism we have
$$
\rho(eae)=eae\rho(e)=\rho(e)eae,\quad a\in\A(S).
$$
Using the decomposition
$$
e\A(S)\pot\A(S)e=\bigoplus_{j,\mu}\ell^1(_iS_\mu\times{}_jS_\lambda)
$$
as a direct sum of $e\A(S)e$ bimodules, we can write
$$
\rho(e)=\sum_{j,\mu}{}\tau_{\mu j},\quad
\tau_{\mu j}\in\ell^1({}_iS_\mu\times{}_jS_\lambda).
$$
It follows that
$$
eae\tau_{\mu j}=\tau_{\mu j} eae
$$
for all $a\in\A(S),\, j\in I,\,\mu\in\Lambda$.

In the remainder of the proof it will be convenient to use the
multiplication on a projective tensor product of Banach algebras given
by $a\ot b\cdot a'\ot b':=aa'\ot b'b$.

For each $j\in I,\;\mu\in\Lambda$ choose $f_{\mu j}\in {}_jS_\lambda$ and
$e_{\mu j}\in{}_iS_\mu$ so that
$$
\Pi(\tau_{\mu j}\cdot f_{\mu j}\ot e_{\mu j})=\Pi(\tau_{\mu j}).
$$
This is clearly possible: If $p_{\mu j}=\nul$ choose
$e_{\mu j}$ and $f_{\mu j}$ arbitrarily. If  $p_{\mu j}\neq\nul$
choose
$f_{\mu j}=(j,p_{\mu j}^{-1},\lambda)$ and $e_{\mu j}=(i,p_{\lambda i}^{-1},\mu)$.

Now put
$$
\Delta=\sum_{j,\mu}\tau_{\mu j}\cdot f_{\mu j}\ot e_{\mu j}.
$$
Then $\Delta\in e\A(S)e\pot e\A(S)e$ and
\begin{align*}
\Pi(\Delta)&=e\\
eae \Delta&=\Delta eae,\quad a\in\A(S)
\end{align*}
so that $e\A(S)e$ has a diagonal and therefore is biprojective. Since
$e\A(S)e\cong\ell^1(G)$, the group $G$ must be finite.
\end{proof}

{\em Acknowledgement.} The first author gratefully acknowledges the
support of NSERC of Canada, and the third author thanks Universit\'e
Laval for its kind hospitality while part of this paper was being
written. Further work was done during the 19$^\mathrm{th}$
International Conference on Banach Algebras, held at B\c{e}dlewo,
14--24 July, 2009. All three authors gratefully acknowledge the
support for the meeting by the Polish Academy of Sciences, the
European Science Foundation under the ESF-EMS-ERCOM partnership, and
the Faculty of Mathematics and Computer Science of Adam Mickiewicz
University of Pozna\'n.

\bigskip


\begin{thebibliography}{DaDu13]}

\bibitem{[BDu]} S. Bowling and J. Duncan, First order cohomology  of Banach semigroup algebras,
Semigroup Forum Vol. 56 (1998), 130--145.

\bibitem{[RB]} R. Brown, Elements of Modern topology, McGraw-Hill, London,
  1968.

\bibitem{[DaDu]} H. G. Dales and J. Duncan, Second order cohomology in
groups of some semigroup algebras,
Banach Algebras `97, Proc.  13th. Internat. Confer. on Banach Algebras, 101--117, Walter de Gruyter, Berlin, 1998.

\bibitem{[GW01]} F. Gourdeau and M. C. White, Vanishing of the third
simplicial cohomology group of $l^1(\ZZ_+)$, Trans. AMS 353, {\bf 5}
(2001), 2003--2017.

\bibitem{[GJW]} F. Gourdeau, B. E. Johnson and M. C. White,
The cyclic and simplicial cohomology of $l^1(\NN)$, Trans. AMS 357, {\bf 12}
(2005), 5097--5113.

\bibitem{[GLW05]} F. Gourdeau, Z. A. Lykova and M. C. White,
A K\~A¼nneth formula in topological homology and its applications to the simplicial cohomology of $l^1({\ZZ}^k_+)$,
Studia Math. 166 (2005), 29--54.

\bibitem{[GLW06]} F. Gourdeau,
Z. A. Lykova and M. C. White, The simplicial homology and cohomology
of $L^1(\RR_+^k)$, Banach algebras and their applications,
Contemp. Math., 363, AMS, Providence, RI, 2004, 95--109.

\bibitem{[GPW]} F. Gourdeau,
A. Pourabbas and M. C. White, Simplicial cohomology of some
semigroup algebras, Canadian Mathematical Bulletin, {\bf 50} (2007),
56--70.

\bibitem{[Gr95]} N. Gr\o nb\ae k, Morita equivalence for Banach algebras, J. Pure and Applied Algebra {\bf 99} (1995), 183--219.

\bibitem{[Gr96]} N. Gr\o nb\ae k, Morita equivalence for self-induced Banach algebras, Houston J. Math. {\bf 22} (1996), 109--140.

\bibitem{[He89]} A. Ya. Helemski\u\i, The homology of Banach and topological algebras.
Kluwer Academic Publishers, Dordrecht, 1989.

\bibitem{[Ho]} John M. Howie, Fundamentals of semigroup theory.
Oxford University Press, Oxford, 1995.

\bibitem{[BEJ1]}B.E. Johnson, Cohomology in Banach algebras, {\it Mem. Amer. Math. Soc.} {\bf 127} (1972).

\bibitem{[BEJ2]}B.E. Johnson,
Weak amenability of group algebras, Bull. London Math. Soc.  {\bf 23}  (1991), 281--284.

%

\bibitem{[LW]} Z. Lykova, M. C. White,
Excision in the cohomology of Banach algebras with coefficients in dual modules,
Banach Algebras `97, Proc.  13th. Internat. Confer. on Banach Algebras, 341--361, Walter de Gruyter, Berlin, 1998.

\bibitem{[Sel]} Yu. Selivanov,
Cohomological Characterizations of Biprojective and Biflat Banach
Algebras,
Mh. Math. {\bf 128} (1999), 35--60.

\bibitem{[Wod]} M. Wodzicki, 
The long exact sequence in cyclic homology associated with an extension of algebras,
C. R. Acad. Sci. Paris Sr. I Math. {\bf 306} (1988), 399--403.

 \end{thebibliography}
\end{document}